\def\ve#1{\mathchoice{\mbox{\boldmath$\displaystyle\bf#1$}}
{\mbox{\boldmath$\textstyle\bf#1$}}
{\mbox{\boldmath$\scriptstyle\bf#1$}}
{\mbox{\boldmath$\scriptscriptstyle\bf#1$}}}
\newcommand\veb{{\ve b}}
\newcommand\vece{{\ve e}}
\newcommand\vel{{\ve l}}
\newcommand\veu{{\ve u}}
\newcommand\vew{{\ve w}}
\newcommand\vex{{\ve x}}
\newcommand\vey{{\ve y}}
\newcommand{\df}{\mathrel{\mathop:}=}
\newtheorem*{rep@theorem}{\rep@title}
\newcommand{\newreptheorem}[2]{%
\newenvironment{rep#1}[1]{%
 \def\rep@title{#2 \ref{##1}}%
 \begin{rep@theorem}}%
 {\end{rep@theorem}}}
\newcommand{\lv}[1]{}
\newtheorem{theorem}{Theorem}
\newtheorem{definition}{Definition}
\newtheorem{proposition}{Proposition}
\theoremstyle{remark}
\newtheorem*{claim*}{Claim}
\newtheorem*{example*}{Example}
\newcommand{\ignore}[1]{}
\def\N{\mathbb{N}}
\def\R{\mathbb{R}}
\def\Z{\mathbb{Z}}
\def \DD {\mathcal{D}}
\DeclareMathOperator{\gap}{\mathrm{gap}}
\begin{document}

\title{Better and Simpler Reducibility Bounds over the Integers}

\author{Asaf Levin
\thanks{Faculty of Data and Decision Sciences, Technion -- Israel Institute of Technology, Haifa, Israel. Email: levinas@technion.ac.il}}

	\maketitle

\begin{abstract}
We study the settings where we are given a function of  $n$ variables defined in a given box of integers.  We show that in many cases we can replace the given objective function by a new function with a much smaller domain. Our approach allows us to transform a family of weakly polynomial time algorithms into strongly polynomial time algorithms. 
\end{abstract}
	
{\bf Keywords:} integer programming; analysis of algorithms; reducibility bounds.
	
	\section{Introduction}
Our study concerns a generalization of the following question: given a linear function $\vew \vex$, what is the smallest value of  $\|\bar{\vew}\|_\infty$ of  an equivalent function $\bar{\vew} \vex$?
	We follow previous work and say that $\bar{\vew}$ is \emph{equivalent} if $\vew \vex > \vew \vey$ implies that $ \bar{\vew} \vex > \bar{\vew}\vey$ for all feasible integral vectors $\vex, \vey$.
The seminal work of Frank and Tardos~\cite{FT} provides an algorithm to compute such a vector.  Their approach is based on the simultaneous diophantine approximation, and thus the time complexity of applying this procedure is very slow although polynomially bounded.  Since this work was published, it was used over and over again to transform a weakly polynomial time algorithm for a given subclass of integer program into a strongly polynomial time algorithm for the same subclass.  More recently, in \cite{Eisen23} it was demonstrated that if we are willing to settle on existence results, then better (i.e., smaller) bounds can be derived. This existence result is usually constructive and thus there exists an exponential time algorithm to derive the equivalent function. We follow these two research directions regarding the study of existence and efficiently computable results and further improve further these bounds.  Furthermore, we also consider new generalizations of this problem and obtain the first known bounds to these generalizations.

\paragraph{Notation.} We use the following standard notation for vectors by using boldface (e.g., $\vex$) and for their entries we use normal font (for example, the $i$-th entry of~$\vex$ is~$x_i$).
For positive integers $m \leq n$ we set $[m,n] \df \{m,\ldots, n\}$ and $[n] \df [1,n]$. For vectors we use similar notation so for $\vel, \veu \in \Z^n$ with $\vel \leq \veu$, $[\vel, \veu] \df \{\vex \in \Z^n \mid \vel \leq \vex \leq \veu\}$.

\paragraph{The model.}
For a function $f: \Z^n \to \Z$ and two vectors $\vel, \veu \in \Z^n$, we let the gap of $f$ be $$f_{\gap}^{[\vel, \veu]} \df \max_{\vex, \vex' \in [\vel, \veu]} |f(\vex) - f(\vex')| ,$$ and if $[\vel, \veu]$ is clear from the context we write $f_{\gap}$.

We define equivalence over a domain as follows.
Let $f, g: \R^n \to \R$ be functions such that $\forall \vex \in \Z^n:\, f(\vex), g(\vex) \in \Z$ and let $\DD \subseteq \Z^n$.
We say that $g$ and $f$ are \emph{equivalent} on $\DD$ if
\begin{equation} \label{eq:f_equivalence}
\forall \vex, \vey \in \DD: \, f(\vex) \geq f(\vey) \Leftrightarrow g(\vex) \geq g(\vey)  .
\end{equation}
We use this definition of equivalence to define reducibility bound as follows.
\begin{definition}[$\rho$-reducibility] \label{def:reducibility}
Let $\rho: \N \to \N$. 
We say that a function $f: \Z^n \to \Z$ that satisfies some structural property $\upsilon$ is \emph{$\rho$-reducible} if, for every $N \in \N$, there exists a function $g: [-N,N]^n \to \Z$ satisfying $\upsilon$, which is equivalent to $f$ on $[-N,N]^n$ and $g_{\gap}^{[-N,N]^{n}} \leq \rho(N)$.  If there is such a pair of functions $f$ and $g$ for a given bound $\rho$, we also say that $f$ is $\rho$-reducible to $g$.
\end{definition}

Our goal here is to revisit the famous result of Frank and Tardos that can be stated as follows.
Every linear function $f: \R^n \to \R$ is $\left(2^{n^3}N^{n^2}\right)$-reducible to a linear function $g$ that can be computed in polynomial time.
Furthermore, they have shown that without loss of generality we can assume that if $f$ is a linear function, then $N\leq (n+1)!+1$ and thus $\log N \in O(n \log n)$ (see Theorem 4.2 in \cite{FT}).  In their work, they establish a strongly polynomial time algorithm for constructing the linear function $g$, but the time complexity of their algorithm is very large and thus limit the usage of their result.  The common practice of using their work for establishing a strongly polynomial time algorithm for a given problem, is as follows. First, establish a polynomial time algorithm for the problem where the weakly polynomial term of the time complexity is given as a polynomial term of the binary encoding length of the objective function. Then, using the work of \cite{FT} to argue that this is sufficient for establishing a strongly polynomial time algorithm for the problem as there is a sufficiently large box where the objective function is equivalent to another function whose gap is an exponential function of the dimension of the problem.  As one such recent example for this strategy, we refer to the use of \cite{FT} in \cite{epstein2024efficient}.  Following the results for the linear case, we say that an algorithm for computing an equivalent function has a {\em strongly polynomial time} if its time complexity is polynomial in $n+\log N$ and its reducibility bound is single exponential in $n$ and polynomial in $N$. For such an algorithm we can indeed use the equivalent function to transform a weakly polynomial time algorithm for integer programming into a strongly polynomial time algorithm for the same problem.

Since the problem of finding an equivalent function to $f$ over $[-N,N]^n$ with a minimum gap is solved trivially by sorting the different values of $f$ over $\DD=[-N,N]^n$ and letting the vector $\vex$ that is the $i$-th smallest value of $f$ be assigned the value $g(\vex)=i$.  Clearly, the gap is at most $(2N+1)^n-1$ and this is obviously best possible if all values of $f(\DD)$ are distinct, i.e., $f$ is one-to-one.  However, this trivial solution does not help algorithm designers in using such an equivalent function as such a general function cannot be optimized or analyzed without going over all points in $\DD$.  Thus, we would like to derive bounds for subsets $\upsilon$ of the functions defined over $\DD$, such that both $f$ and its equivalent function $g$ belong to $\upsilon$.  In this sense, the result of \cite{FT} is for $\upsilon$ being the set of linear functions.   Thus, we have defined the notion of reducibility bounds over the function subset $\upsilon$ over the domain $\DD$.  This is the definition used by \cite{Eisen23} although not stated there in this form.

\paragraph{Related work following the results of \cite{FT}.} Motivated by applications in compressed sensing, the problem of finding relatively small reducibility bounds was considered in \cite{banff} where the authors claim some reducibility bounds for separable convex functions (see the discussion on the second topic in this report).  This report does not state explicit bounds but they argue that it is reasonable to consider bounds for which there exists an inefficient algorithm that computes the equivalent function as well as bounds that can be computed efficiently.  Here, we both improve as well as extend the bounds to other regimes of functions $f$ for which we would like to prove the existence of an equivalent function with a small gap and to compute such functions with a small gap efficiently.

In \cite{Eisen23},  the motivation for proving such bounds was  the possible  transformation of a weakly polynomial time algorithm for block-structured integer programs with a linear objective into a strongly polynomial time algorithm.  The resulting algorithm is the one in \cite{E+22arxiv} for linear objectives. The results of \cite{Eisen23}  are upper bounds and lower bounds on the reducibility bounds of  linear functions and of separable convex functions.  The bounds of \cite{banff} and most bounds of \cite{Eisen23} are existence results that can be computed by an algorithm that runs in exponential time.    

Our focus here is on the existence reducibility bounds that we show as well as the bounds that can be computed efficiently.  Our techniques for providing bounds that can be computed efficiently are based on applying our tool for establishing existence bounds together with the strongly polynomial time algorithm of Frank and Tardos \cite{FT}.  Since such reducibility bounds that can be computed efficiently are known for the linear case (namely, the results of \cite{FT}) we will consider these results only for the other cases of $\upsilon$.

\paragraph{Paper outlines and our results.} In Section \ref{sec:prel} we present a tool for deriving reducibility bounds for specific cases of the function subset $\upsilon$ and we summarize the results of \cite{FT} that are relevant for establishing polynomial time algorithm for constructing the equivalent function.  In Section \ref{sec:lin} we consider the case of linear functions and prove an improved existence bound smaller than $2N \cdot \left((n+3)N\right)^{n}$.  For this case of linear functions, \cite{Eisen23} proved a weaker existence upper bound on the gap of an equivalent linear function of $(4nN)^n$ and they have shown a lower bound of $N(nN)^{n-1}$ on the minimum possible such gap.  Then, in Section \ref{sec:sep}, we turn our attention to separable functions (both general separable functions, as well as separable convex functions).  These are functions $h:\DD \rightarrow \Z$, that can be written as $h(\vex)=\sum_{i\in [n]} h_i(x_i)$.  For such functions we prove an existence upper bound smaller than $(nN+3/2)^{2nN}$ on the gap of an equivalent function (both for general separable functions as well as for separable convex functions).  This bound should be compared to the upper bound of $(n^2N)^{n(2N+1)+1}$ shown in \cite{Eisen23}.  We also provide our first computable bound, but for the case of separable functions, the time complexity of the algorithm for finding the equivalent function is polynomial in $nN$ and thus it is not strongly polynomial in the side of the box. In Section \ref{sec:sep-quad} we show the generality of our approach by considering the case of separable quadratic functions for which we show an improved upper bounds whose exponents are linear in $n$ whereas the general bound due to separability is only linear in $nN$.  Here, the time complexity of the algorithm for finding an equivalent function is strongly polynomial. In Section \ref{sec:quad} we consider the case of quadratic (non-separable) functions and we derive upper bounds whose exponents are linear in $n^2$. 
 We refer to the theorems in the corresponding sections for the precise values of the bounds we show.
 
Thus, in what follows we show that there is a strongly polynomial time algorithm for constructing an equivalent function with a small gap in the cases of separable quadratic functions and of quadratic functions, while the obtained algorithm for the separable functions (convex and general) has polynomial time algorithm (for the last case, the time complexity will be polynomial in $n+N$).

\section{Preliminaries - the technical tools via linear programming} \label{sec:prel}
Next, we consider the main tool that we use for our improved bounds.  We use arguments that are standard within the linear programming literature.

We say that a feasibility linear program is {\em scalable} if for every $\vex$ that is feasible for the linear program, and for every $\alpha \geq 1$, we have that the vector $\alpha \vex$ is feasible for the linear program.  We note that if all constraints are either equality constraints with right hand side equal to zero, or inequalities of the form greater or equal with non-negative right hand side, then the linear program is scalable.  

Next, consider a feasibility linear program consisting of the constraints $A\vex \geq \veb$ where $A$ has $d$ columns (i.e., a linear program in dimension $d$) with all components of $(A,\veb)$ being integers in the interval $[-a,a]$.  Assume that the linear program is scalable, feasible, and that there is an extreme point of the polyhedron $\{ \vex \in \R^d: A\vex \geq \veb\}$.  By Cramer's rule, such an extreme point is a rational vector where each component   is a rational number with both the numerator and the denominator being integers of absolute value  at most $a^d \cdot d! \in O((da)^d)$. We observe the fact that for every $d$ we have $d! \leq (\frac{d+2}{2})^{d-1}$ to conclude that these numerators and denominators are at most $a\cdot (\frac{(d+2)a}{2})^{d-1}$.  The observation that  $d! \leq (\frac{d+2}{2})^{d-1}$ holds by the mean inequality as for every positive $i$ we have $(\frac{d+2}{2} -i) \cdot (\frac{d+2}{2} +i) \leq (\frac{d+2}{2})^2$ and we apply the inequality for value of $i$ for which $\frac{d+2}{2} -i \geq 2$ and so $\frac{d+2}{2} +i \leq d$.  Specifically, for even values of $d$, we apply the inequality for every positive integer $i$ and for odd values of $d$, we apply it for every positive half integer that is not integer $i$.  By multiplying the resulting inequality we get that $d! \leq (\frac{d+2}{2})^{d-1}$ for all $d$.  We similarly can conclude that $d! \leq (\frac{d+1}{2})^d$ for all $d$.  
In our settings and in order to compare our bounds to earlier work it will be easier to state the upper bound whose exponent is $d-1$ and not $d$, and thus we will mostly use the former one.  

Obviously, we can use the Stirling's formula for the upper bound on $d!$, that is, $$d! \leq \sqrt{2\pi d} \cdot \left( \frac de \right)^d \cdot e^{\frac {1}{12\cdot d}}$$ where $e$ stands for the base of the natural logarithm.  The claims of the theorems in the sequel are based on the bound of Stirling's formula.

Furthermore, we let $B$ be the submatrix of $A$ consisting of $d$ affine independent constraints that are satisfied as equality by one (fixed) extreme point. We consider the vector that is $|det(B)|$ times the extreme point that gives a feasible solution to the linear program where every component of it is an integer with absolute value of at most $d! \cdot a^d$.  The feasibility of the resulting integer vector is a straightforward result of the scalability of the corresponding linear program. Thus, we conclude the following.  

\begin{proposition}\label{lp:prel}
Consider a feasibility scalable linear program consisting of the constraints $A\vex \geq \veb$ and with all components of $(A,\veb)$ being integers in the interval $[-a,a]$.  Assume that the linear program is feasible, and that $\{ \vex \in \R^d: A\vex \geq \veb\}$ has an extreme point. Then there is an integer feasible solution $\vex$ for the linear program satisfying $$||\vex ||_{\infty} \leq d! \cdot a^d \leq \sqrt{2\pi d} \cdot \left( \frac de \right)^d \cdot e^{\frac {1}{12d}} \cdot a^d  .$$ 
\end{proposition}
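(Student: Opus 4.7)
The plan is to combine three standard ingredients already flagged in the preamble of the section: the existence of an extreme point, Cramer's rule with the Leibniz bound on determinants, and the scalability hypothesis.

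First, I would invoke the hypothesis that the polyhedron $P \df \{\vex \in \R^d : A\vex \geq \veb\}$ has an extreme point, and fix one such point $\vex^*$. By the standard characterization of extreme points, there exists a $d \times d$ submatrix $B$ of $A$, formed from $d$ linearly independent rows whose corresponding inequalities are tight at $\vex^*$, and a matching subvector $\veb'$ of $\veb$, such that $B\vex^* = \veb'$. Since $B$ is nonsingular, Cramer's rule gives $x^*_i = \det(B_i)/\det(B)$, where $B_i$ is obtained from $B$ by replacing its $i$-th column by $\veb'$. All entries of $B$ and $B_i$ are integers in $[-a,a]$, so expanding each determinant via the Leibniz formula as a signed sum of $d!$ products of $d$ entries yields $|\det(B)|, |\det(B_i)| \leq d! \cdot a^d$.

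Second, I would clear denominators. Let $\alpha \df |\det(B)|$. Since $B$ is a nonsingular integer matrix, $\alpha$ is a positive integer, in particular $\alpha \geq 1$. Set $\vex \df \alpha \vex^*$. By Cramer's rule each coordinate satisfies $x_i = \pm |\det(B_i)|$, so $\vex \in \Z^d$ and
\[
\|\vex\|_\infty \;=\; \max_i |\det(B_i)| \;\leq\; d! \cdot a^d.
\]
Now I would invoke scalability: since $\vex^*$ is feasible and $\alpha \geq 1$, the scaled vector $\vex = \alpha \vex^*$ is feasible for $A\vex \geq \veb$. This yields an integer feasible solution with the claimed $\ell_\infty$ bound.

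Finally, I would convert the $d!$ factor into the stated Stirling form using the inequality
\[
d! \;\leq\; \sqrt{2\pi d}\,\left(\frac{d}{e}\right)^{d} e^{\frac{1}{12d}},
\]
which, multiplied with $a^d$, gives exactly the right-hand expression in the proposition. No step is genuinely hard; the only point that requires attention is checking that the multiplier used to clear denominators is at least $1$ so that scalability legitimately applies, and this is guaranteed because $\det(B)$ is a nonzero integer. Everything else is bookkeeping around the Leibniz expansion and Stirling's bound.
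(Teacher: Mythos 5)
Your proposal is correct and follows essentially the same route as the paper: fix an extreme point, bound its Cramer's-rule representation by $d!\cdot a^d$, scale by $|\det(B)|\geq 1$ to obtain an integer point that remains feasible by scalability, and finish with Stirling's bound on $d!$. Your explicit check that the multiplier is at least $1$ (so scalability applies) is exactly the implicit step the paper relies on.
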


\paragraph{The relevant results of Frank and Tardos \cite{FT}.} Our use of Proposition \ref{lp:prel} is based on the following method.  We encode a function in $\upsilon$ using a small number of decision variables, such that the value of the function in each point $[-N,N]^n$ is a linear function of these variables subject to the constraint that an integer values of the decision variables results in an integer value of the function in each point.  Then, enforcing the requirement that the new function is equivalent to $f$ is equivalent to adding constraints to the linear program.  Furthermore, the structural property of the functions in $\upsilon$ is enforced by (perhaps) adding more constraints to the linear program.  The resulting linear program  satisfies the assumption of Proposition \ref{lp:prel} and so we will use its claim for showing the existence upper bound on the gap of the equivalent function.

In order to use the results of \cite{FT} we interpret the resulting integer linear program obtained by adding the constraints that the decision variables are integers as a relaxation of the problem of finding an equivalent function of a linear function of $d$ decision variables in a box $[-\tilde{N},\tilde{N}]^d$ for some integer $\tilde{N}$ (that is a function of $n,N$).  Thus, we can use the algorithm of \cite{FT} for finding a feasible solution for this integer linear program whose time complexity is polynomial in $d+\log \tilde{N}$ and achieve an equivalent function, that is, a feasible solution for the integer linear program whose gap is upper bounded by  $2^{d^3}\tilde{N}^{d^2}$.  Recall that for this linear case, we can assume without loss of generality that $\tilde{N}\leq (d+1)! +1$ and thus the time complexity of this algorithm is polynomial in $d$.  We summarize this approach.

\begin{proposition}\label{FT-prop}
If the scalable linear program can be considered as finding an equivalent function in dimension $d$ over the box $[-\tilde{N},\tilde{N}]^d$, then there is a feasible solution for the integer linear program with gap at most $2^{d^3}\tilde{N}^{d^2}$ that can be found using an algorithm whose time complexity is polynomial in $d$.
\end{proposition}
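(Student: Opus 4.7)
The plan is to recognize the statement as a direct translation of the Frank--Tardos theorem into the integer programming language we have set up. By hypothesis, the scalable integer program can be reinterpreted as the problem of finding an equivalent linear function on the box $[-\tilde N,\tilde N]^d$: the decision variables are identified with the coordinates of a candidate coefficient vector $\bar\vew$, and the constraints of the program are exactly the ordering inequalities $\bar\vew \vex \ge \bar\vew \vey$ enforced by equivalence with some reference linear function $\vew \vex$ in dimension $d$.

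First I would invoke Theorem 4.2 of \cite{FT} to reduce, without loss of generality, to the case $\tilde N \le (d+1)! + 1$, so that $\log \tilde N \in O(d \log d)$. Second, I would run the main Frank--Tardos algorithm on the reference function $\vew \vex$. It produces, in time polynomial in $d + \log \tilde N$ and therefore polynomial in $d$, an integer coefficient vector $\bar\vew$ that is equivalent to $\vew$ on the box and for which the corresponding linear function $\bar\vew \vex$ is $\rho$\hy reducible with $\rho(\tilde N) = 2^{d^3}\tilde N^{d^2}$ in the sense of Definition~\ref{def:reducibility}. Third, reading $\bar\vew$ as an assignment to the decision variables of our integer program, all of the original ordering constraints are satisfied by equivalence, and integrality holds by construction, so $\bar\vew$ is a feasible integer point whose induced linear function has $\gap$ at most $2^{d^3}\tilde N^{d^2}$, as required.

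The main obstacle is really conceptual rather than computational: one must carefully check that the hypothesis ``the scalable linear program can be considered as finding an equivalent function in dimension $d$ over the box $[-\tilde N,\tilde N]^d$'' establishes a genuine bijection between feasible integer solutions of the integer program and equivalent integer coefficient vectors, so that the output of the Frank--Tardos algorithm can be substituted into the program verbatim. Once this identification is verified, both the gap bound and the runtime are immediate consequences of the results of \cite{FT} recalled above, and no further argument is needed.
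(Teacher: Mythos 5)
Your proposal is correct and is essentially identical to the paper's own justification, which is just the paragraph preceding the proposition: interpret the integer program as (a relaxation of) the problem of finding an equivalent linear function in dimension $d$ over $[-\tilde{N},\tilde{N}]^d$, run the Frank--Tardos algorithm, and use the fact that without loss of generality $\tilde{N}\leq (d+1)!+1$ to conclude the running time is polynomial in $d$ and the gap is at most $2^{d^3}\tilde{N}^{d^2}$. One small remark: no bijection is required, since the program is a relaxation it suffices that any coefficient vector equivalent over the whole box (in particular the Frank--Tardos output, with $g_{gap}$ set to its induced gap) satisfies all constraints, which is exactly the direction you use.
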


\section{Linear functions\label{sec:lin}}
Assume that  $f: \R^n \to \R$ is given as $f(\vex) = \sum_{i=1}^n f_i x_i$ where $f_i, \forall i$ is given as an input.  We would like to compute a linear function $g(\vex) = \sum_{i=1}^n g_i x_i$ that is equivalent to $f$ and has a small gap.  Here, we treat the $g_i$ for all $i$ as decision variables, and note that if these decision variables are integers, then the resulting linear function maps integer vectors to integers.  Consider the following linear program with $d=n+1$ variables, where the first $n$ variables are the $g_i$ (for $i\in [n]$) and the last decision variable is $g_{gap}$.  We observe that for every $\vex\in [-N,N]^n$, we have that $g(\vex)=\sum_{i=1}^n g_i x_i$ is a linear function of the decision variables.

The constraints of the linear program are $$g_{gap} - [g(\vex)-g(\vey) ] \geq 0, \ \ \ \forall \vex,\vey \in  [-N,N]^n$$ and the constraints enforcing the equivalence to $f$.  This last family of constraints is stated as follows. For each pair $ \vex,\vey \in  [-N,N]^n$, we have one of the
following three constraints.
\begin{enumerate} 
\item If $f(\vex) > f(\vey)$, we introduce the constraint $g(\vex)-g(\vey) \geq 1$.
\item If $f(\vex) = f(\vey)$, we introduce the constraint $g(\vex)-g(\vey) =0$.
\item If $f(\vex) < f(\vey)$, we introduce the constraint $g(\vey)-g(\vex) \geq 1$.
\end{enumerate}

Since all equality constraints have right hand side equal to zero, and all inequality constraints are of the form of greater or equal a non-negative term, we conclude that this linear program is scalable.  Furthermore it is feasible because $f$ itself satisfies the constraints together with $f_{gap}$, since it is integer for every integer vector so if two values of $f$ are not equal, then the absolute value of their difference is not strictly smaller than $1$. Furthermore, $g_{gap}$ is lower bounded by $0$, and thus the polyhedron defined using these linear inequalities has an extreme point.  

Let $A$ be the constraint matrix of this feasibility linear program and $\veb$ be the right hand side.  Then, $A$ has $d=n+1$ columns, the maximum absolute value of a coefficient in $(A,\veb)$ is at most $2N$ because $g_i$ is multiplied by $x_i-y_i$ and $x_i,y_i \in [-N,N]$ so their difference is at most $2N$.  So we have $a=2N$. By Proposition \ref{lp:prel}, we conclude that there is an integer solution with $g_{gap} \leq 2N \cdot ((n+3)N)^{n}$.  Recall that for this case \cite{Eisen23} have shown a weaker upper bound on the gap of an equivalent linear function of $(4nN)^n$ and they have shown a lower bound of $N(nN)^{n-1}$ on the gap of such equivalent function.  Here, we state the bound we can achieve by using the Sterling's formula for upper bounding $d!$.  That is, we conclude that $g_{gap} \leq d! a^d \leq \sqrt{2\pi d} \cdot \left( \frac de \right)^d \cdot e^{\frac {1}{12d}} \cdot a^d $, and using our values of $a,d$ we conclude the following.

\begin{theorem}
Let $$\rho_{lin} = \sqrt{2\pi (n+1)} \cdot \left( \frac {2N(n+1)} e \right)^{n+1} \cdot e^{\frac {1}{12(n+1)}} \ . $$
Every linear function $f: \R^n \to \R$ is $\rho_{lin} $-reducible to some linear function $g$.
\end{theorem}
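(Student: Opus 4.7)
The plan is to apply Proposition \ref{lp:prel} to exactly the scalable LP described just before the theorem statement, and then to substitute the Stirling bound on $d!$ already derived in Proposition \ref{lp:prel}. First I would set $d = n+1$ and take as decision variables the candidate coefficients $g_1,\ldots,g_n$ of the equivalent linear function together with an auxiliary variable $g_{\mathrm{gap}}$. Since $g(\vex)-g(\vey) = \sum_i (x_i - y_i) g_i$ is linear in the $g_i$'s for each fixed pair $\vex,\vey \in [-N,N]^n$, the gap constraints $g_{\mathrm{gap}} - (g(\vex)-g(\vey)) \geq 0$ and the equivalence constraints (the three case analysis with right-hand side in $\{0,1\}$) are all linear in the $d = n+1$ variables.

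Next I would verify the three hypotheses of Proposition \ref{lp:prel}. Scalability follows from the fact that every equality has right-hand side $0$ and every inequality has the form ``(linear expression) $\geq$ nonnegative constant'', so multiplying any feasible vector by $\alpha \geq 1$ preserves feasibility. Feasibility follows by plugging in $g_i = f_i$ together with $g_{\mathrm{gap}} = f_{\mathrm{gap}}^{[-N,N]^n}$; the key point is that $f$ is integer-valued on integer points, so $f(\vex) > f(\vey)$ already implies $f(\vex) - f(\vey) \geq 1$. Existence of an extreme point of the feasible polyhedron is immediate because the variable $g_{\mathrm{gap}}$ is bounded below by $0$ and, after fixing it, the remaining equivalence constraints bound the $g_i$'s from below through the nonzero right-hand sides in the strict comparisons.

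Then I would read off the two parameters that enter Proposition \ref{lp:prel}. The dimension is $d = n+1$. For the coefficient bound, note that every entry of $(A,\veb)$ is either of the form $x_i - y_i$ with $x_i, y_i \in [-N,N]$ (coefficients on $g_i$), or $\pm 1$ (the coefficient of $g_{\mathrm{gap}}$ and the right-hand sides), so $a = 2N$ suffices. Proposition \ref{lp:prel} then produces an integer feasible solution whose infinity norm is at most $d! \cdot a^d \leq \sqrt{2\pi d}\,(d/e)^d\, e^{1/(12d)}\cdot a^d$, which after substituting $d = n+1$ and $a = 2N$ is exactly $\rho_{\mathrm{lin}}$. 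In particular $g_{\mathrm{gap}} \leq \rho_{\mathrm{lin}}$, so the function $g(\vex) = \sum_i g_i x_i$ is linear, integer-valued on $\Z^n$, equivalent to $f$ on $[-N,N]^n$ by construction, and has gap at most $\rho_{\mathrm{lin}}$.

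The hard part is essentially bookkeeping rather than mathematics: one must take care that the Cramer's-rule bound from Proposition \ref{lp:prel} really applies to the coordinate of $g_{\mathrm{gap}}$ (not just to the $g_i$'s), and that adding the gap variable does not inflate $a$ beyond $2N$. Both concerns dissolve because the coefficient of $g_{\mathrm{gap}}$ in every constraint is $\pm 1 \leq 2N$, so the extremal-point argument bounds every coordinate uniformly by $d!\cdot a^d$, giving the stated $\rho_{\mathrm{lin}}$.
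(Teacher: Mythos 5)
Your proposal is correct and follows essentially the same route as the paper: the same scalable LP in $d=n+1$ variables (the coefficients $g_i$ plus $g_{\mathrm{gap}}$), the same verification of scalability, feasibility via $f$ itself, and existence of an extreme point, and the same parameters $a=2N$, $d=n+1$ fed into Proposition \ref{lp:prel} with the Stirling bound to obtain $\rho_{lin}$. No substantive differences from the paper's argument.
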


\section{Separable functions\label{sec:sep}}
Next, we turn our attention to separable functions.  These are functions $f(\vex)$ that can be written as $f(\vex)=\sum_{i=1}^n f_i(x_i)$ where for all $i\in [n]$, $f_i:\Z \rightarrow \Z$ for which we require the equivalent function $g$ to be separable as well.  We will also consider the case where the separable functions need to be convex, i.e., where for all $i$ both $f_i$ and $g_i$ need to be convex, as well as the case where this is required only for some components.

The method to handle such functions is to increase the dimension of the corresponding scalable linear program.  Without loss of generality we assume that $f_i(-N)=0$ for all $i\in [n]$, and we will set $g_i(-N)=0$ for all $i\in [n]$.  Our decision variables are the increment of $g_i$ for all $i$.  That is, we will let $h_{i,k}=g_i(k)-g_i(k-1)$ be defined for $i\in [n], k\in [-N+1,N]$.  These are $2nN$ decision variables, and we add to that collection of variables another decision variable $g_{gap}$ denoting the maximum gap.  We treat the family of $h_{i,k}$ for all $i,k$ as decision variables and note that if these decision variables are integers, then defining $g_i(k')=\sum_{k=-N+1}^{k'} h_{i,k}$ for $k'\geq -N+1$ and $g_i(-N)=0$ gives an integer value for all $i,k'$.  Furthermore, $g_i(k')$ is a linear combination of the decision variables with binary coefficients.  Furthermore, letting $g(\vex)=\sum_{i\in [n]} g_i(x_i)$ is also a  linear combination of the decision variables with binary coefficients for all $\vex \in [-N,N]^n$. 

For separable functions, we use exactly the same set of linear inequalities as we did for linear functions. That is,  $g_{gap} - [g(\vex)-g(\vey) ] \geq 0, \ \ \ \forall \vex,\vey \in  [-N,N]^n$ and the constraints enforcing the equivalence to $f$.  That is, for each pair $ \vex,\vey \in  [-N,N]^n$, we have one of the
following three constraints.
\begin{enumerate} 
\item If $f(\vex) > f(\vey)$, we introduce the constraint $g(\vex)-g(\vey) \geq 1$.
\item If $f(\vex) = f(\vey)$, we introduce the constraint $g(\vex)-g(\vey) =0$.
\item If $f(\vex) < f(\vey)$, we introduce the constraint $g(\vey)-g(\vex) \geq 1$.
\end{enumerate}

If we require that $g_i$ is convex for some $i\in [n]$ (and we know that $f_i$ is convex as well), then we add the constraints of monotone non decreasing increments, that is, $h_{i,k} \leq h_{i,k+1}$ for all $k\in [-N+1,N-1]$.  Similarly, if we require that $g_i$ is concave, we add the constraints that  $h_{i,k} \geq h_{i,k+1}$ for all $k\in [-N+1,N-1]$.  Observe that these additional constraints are scalable as well (as the right hand side is zero), and that the maximum coefficient in such a row is $1$.  For all these cases, the proof continues as follows.

Similarly to the case of linear functions, we conclude that the new linear program is scalable.  Furthermore it is feasible because $f$ itself together with its implied increments satisfies the constraints together with $f_{gap}$, since it is integer for every integer vector (so if two values of $f$ are not equal, then the absolute value of their difference is not strictly smaller than $1$). Furthermore, $g_{gap}$ is lower bounded by $0$, and thus the polyhedron defined using these linear inequalities has an extreme point.  

Let $A$ be the constraint matrix of this feasibility linear program and $\veb$ be the right hand side.  Then, $A$ has $d=2Nn+1$ columns, the maximum absolute value of a coefficient in $(A,\veb)$ is $1$ because $g(\vex)$ and $g(\vey)$ are summations of subsets of the decision variables so their difference is a linear combination of the decision variables with coefficients in  $\{ -1,0,1\}$.  So we have $a=1$. By Proposition \ref{lp:prel}, we conclude that there is an integer solution with $g_{gap} \leq d! $.  In order to compare our result to the previous work, we use that $d! \leq (\frac{d+2}{2})^{d-1}$ and conclude that $g_{gap} \leq (nN+3/2)^{2nN}$. The same upper bound holds also for the case where $f,g$ are separable convex (or separable concave) and for a mixture of these cases. 
Recall that in \cite{Eisen23}, the authors considered only the case of separable convex functions for which they showed that such functions are $(n^2N)^{n(2N+1)+1}$-reducible to a separable convex function.  It is straightforward to see that our new bound is significantly better than the one from \cite{Eisen23} as both $nN+3/2 < n^2N$ and $2nN < n(2N+1)+1$. 

However, when we use the upper bound due to the Stirling's formula (in all these cases), we get the following result.

\begin{theorem}
Let $$\rho_{sep} =   \sqrt{2\pi (2nN+1)} \cdot \left( \frac {2nN+1}e \right)^{2nN+1} \cdot e^{\frac {1}{24nN+12}} \ . $$
Every separable function $f: \R^n \to \R$ is $\rho_{sep}$-reducible to some separable function $g$.  Every separable convex function $f: \R^n \to \R$ is $\rho_{sep}$-reducible to some separable convex function $g$. Every separable concave function $f: \R^n \to \R$ is $\rho_{sep}$-reducible to some separable concave function $g$.
\end{theorem}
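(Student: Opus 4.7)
The plan is to instantiate the linear-programming framework of Proposition~\ref{lp:prel} with the encoding described at the beginning of Section~\ref{sec:sep}. Specifically, I would introduce $2nN+1$ decision variables: the increments $h_{i,k}$ for $i\in[n]$ and $k\in[-N+1,N]$, together with a single gap variable $g_{\text{gap}}$. After normalizing by $g_i(-N)=0$, each value $g_i(k')=\sum_{k=-N+1}^{k'}h_{i,k}$ becomes a $\{0,1\}$-linear combination of the $h_{i,k}$'s, and hence so is $g(\vex)=\sum_i g_i(x_i)$ for every $\vex\in[-N,N]^n$. Crucially, when the $h_{i,k}$'s take integer values the induced $g_i$'s map integers to integers.

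Next I would set up the feasibility LP. For every ordered pair $\vex,\vey\in[-N,N]^n$, include the constraint $g_{\text{gap}}-[g(\vex)-g(\vey)]\ge 0$, together with the appropriate equivalence constraint drawn from the three cases: $g(\vex)-g(\vey)\ge 1$, $g(\vex)-g(\vey)=0$, or $g(\vey)-g(\vex)\ge 1$, according as $f(\vex)$ is greater than, equal to, or less than $f(\vey)$. In the convex (resp.\ concave) regime, also impose $h_{i,k+1}-h_{i,k}\ge 0$ (resp.\ $\le 0$) for the relevant coordinates $i$ and all $k\in[-N+1,N-1]$; a mixture of the three regimes across coordinates is handled coordinate-by-coordinate. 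All constraints are either equalities with right-hand side $0$ or $\ge$-inequalities with non-negative right-hand side, so the LP is scalable. It is feasible, witnessed by the vector of true increments $h_{i,k}^{\star}=f_i(k)-f_i(k-1)$ with $g_{\text{gap}}=f_{\text{gap}}$, since integrality of $f$ forces any nonzero difference to have absolute value at least $1$. The constraint $g_{\text{gap}}\ge 0$ ensures the polyhedron has an extreme point.

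The key numerical observation is that every coefficient of the constraint matrix $(A,\veb)$ lies in $\{-1,0,1\}$: the variables $h_{i,k}$ enter $g(\vex)-g(\vey)$ only through the $\{0,1\}$ expansions of $g_i(x_i)$ and $g_i(y_i)$, whose difference has coefficients in $\{-1,0,1\}$; the monotonicity constraints have coefficients in $\{-1,0,1\}$ as well; and all right-hand sides are $0$ or $\pm 1$. Hence $a=1$ and $d=2nN+1$. Applying Proposition~\ref{lp:prel} yields an integer feasible solution with
\[
g_{\text{gap}}\le d!\cdot a^d = (2nN+1)!\le \sqrt{2\pi(2nN+1)}\cdot\left(\frac{2nN+1}{e}\right)^{2nN+1}\cdot e^{\frac{1}{24nN+12}}=\rho_{sep},
\]
using Stirling's formula as stated in Section~\ref{sec:prel}. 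The resulting $h_{i,k}$'s define, via cumulative sums, the separable function $g$ equivalent to $f$ with the required gap bound, and the convexity or concavity constraints guarantee that the structural property $\upsilon$ is preserved component by component.

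I do not anticipate a genuine obstacle here; the argument is a direct application of the machinery already set up. The only point requiring a small amount of care is verifying that the coefficient bound $a=1$ really holds simultaneously for the equivalence constraints, the gap constraints, and the monotone-increment constraints, and that the scalability assumption is not broken when the coordinate-wise structural property (convex, concave, or neither) varies across $i$. Both checks are routine once the LP is written out explicitly as above.
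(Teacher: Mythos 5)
Your proposal is correct and follows essentially the same route as the paper: the same increment variables $h_{i,k}$ with the normalization $g_i(-N)=0$, the same equivalence and gap constraints, the same monotone-increment constraints for the convex/concave cases, and the same application of Proposition~\ref{lp:prel} with $d=2nN+1$, $a=1$ followed by Stirling's bound. No gaps to report.
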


Next, we turn our attention to equivalent functions that can be constructed using an efficient algorithm.  We establish the following result. 
\begin{theorem}
There is an algorithm whose time complexity is polynomial in $nN$ that given a separable function $f$ (separable convex function) finds an equivalent separable function $g$ (an equivalent separable convex function $g$, respectively) with gap at most $O(2^{(2nN)^3})$. 
\end{theorem}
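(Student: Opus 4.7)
The plan is to reuse the scalable feasibility linear program that produced the existence bound in the previous theorem, but now to round it using Frank and Tardos's algorithm via Proposition~\ref{FT-prop} instead of the elementary Cramer-style argument of Proposition~\ref{lp:prel}.

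First I would take the same LP as in the existence proof, with $d=2nN+1$ decision variables (the $2nN$ increments $h_{i,k}$ together with $g_{gap}$) and with all entries of the constraint matrix lying in $\{-1,0,1\}$; in the separable convex case I would also retain the monotonicity constraints $h_{i,k}\leq h_{i,k+1}$, which are likewise scalable and have $\{-1,0,1\}$-coefficients. The existence proof already established that this LP is scalable, feasible, and that its polyhedron has an extreme point, so the hypothesis of Proposition~\ref{FT-prop} is met.

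Next I would invoke Proposition~\ref{FT-prop} on this LP with dimension $d=2nN+1$: the proposition produces, in time polynomial in $d$ (hence polynomial in $nN$), an integer feasible solution whose $g_{gap}$ component is at most $2^{d^3}\tilde{N}^{d^2}$, where $\tilde{N}$ denotes the box size in the Frank--Tardos reinterpretation. Using the normalization $\tilde{N}\leq (d+1)!+1$ recalled in the preliminaries, and plugging $d=2nN+1$ into the resulting expression, yields a gap of the form $O(2^{(2nN)^3})$ after absorbing lower-order logarithmic factors into the exponent constant. Reconstructing $g$ from the integer increments by partial sums (and using $g_i(-N)=0$) gives the desired separable (respectively separable convex, thanks to the monotonicity rows) function equivalent to $f$.

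The step I expect to be the main obstacle is the technical reinterpretation required by Proposition~\ref{FT-prop}: one has to view our feasibility LP, whose inequalities encode the comparison pattern of $f$ on $[-N,N]^n$, as an instance of ``finding an equivalent linear function in dimension $d$ over a box $[-\tilde{N},\tilde{N}]^d$''. The scalability and the $\{-1,0,1\}$ coefficient bound are exactly what make this reinterpretation go through uniformly in all three regimes (general separable, separable convex, and separable concave), and once it is in place the time and gap bounds drop out of Proposition~\ref{FT-prop} mechanically, so no further structural work beyond the existence proof is required.
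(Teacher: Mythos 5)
There is a genuine gap, and it sits exactly where you flagged it: the ``reinterpretation'' required by Proposition~\ref{FT-prop} is not a technicality to be waved through --- it is the entire content of the paper's proof, and your proposal never supplies it. Proposition~\ref{FT-prop} does not take an arbitrary scalable feasibility LP and round it; it applies when the problem can be recast as finding an equivalent \emph{linear} function in dimension $d$ over a box $[-\tilde{N},\tilde{N}]^d$. The paper makes this concrete by defining a new linear function $f'$ in dimension $d'=2nN$ whose coefficients are the increments $f_i(k)-f_i(k-1)$, observing that every $\vex\in[-N,N]^n$ corresponds to a $0/1$ vector of increments, so the box one needs is the unit cube, i.e.\ $\tilde{N}=1$. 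An equivalent linear $g'$ produced by Frank--Tardos then yields $g$ by partial sums, and the gap is $2^{(2nN)^3}\cdot 1^{(2nN)^2}=2^{(2nN)^3}$. Your computation instead leaves $\tilde{N}$ unidentified and falls back on the normalization $\tilde{N}\leq (d+1)!+1$; plugging that in gives a bound of order $2^{d^3}\bigl((d+1)!\bigr)^{d^2}\approx 2^{(2nN)^3\log(nN)}$, and the extra $\log(nN)$ factor in the exponent cannot be ``absorbed into the constant'': $2^{(2nN)^3\log(nN)}$ is not $O\bigl(2^{(2nN)^3}\bigr)$. So as written the claimed gap bound does not follow.

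A second, smaller issue is the convex case. You propose to retain the monotonicity rows $h_{i,k}\leq h_{i,k+1}$ inside the LP handed to Proposition~\ref{FT-prop}, but Frank--Tardos produces an equivalent linear function, not a feasible point of an LP augmented with arbitrary side constraints, so there is no mechanism in the proposition that enforces those rows. The paper avoids this entirely: since $f_i$ convex means $f'(\vece^{i,k})\geq f'(\vece^{i,k-1})$, and $g'$ is equivalent to $f'$ on the unit cube, the corresponding inequalities $g'(\vece^{i,k})\geq g'(\vece^{i,k-1})$ hold automatically, so the reconstructed $g_i$ has nondecreasing increments and is convex for free (and similarly for concave coordinates). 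In short, your high-level plan (increments plus Proposition~\ref{FT-prop}) is the right one, but you must (i) explicitly build the auxiliary linear function $f'$ and argue that equivalence over the unit cube ($\tilde{N}=1$) transfers to equivalence of $g$ and $f$ on $[-N,N]^n$, which is what gives both the polynomial-in-$nN$ running time and the clean $2^{(2nN)^3}$ bound, and (ii) derive convexity from equivalence rather than from extra constraints.
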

\begin{proof}
We treat the set of variables $\{ h_{i,k} \}_{i,k}$ as a family of decision variables and define the linear function $f'$ of these $d'=2nN$ decision variables by defining the value of the function of the corresponding $d'$ unit vectors as $f'(\vece^{i,k}) =  f_i(k)-f_i(k-1)$ for all $i,k$ where $\vece^{i,k}$ is the unit vector with $1$ in the component corresponding to $h_{i,k}$.  

Then, a new function $g'$ that is equivalent to $f'$ over the unit cube in dimension $2nN$ naturally defines an equivalent function $g$ to $f$ in dimension $n$.  This is carried out by letting $g_i(k')=\sum_{k=-N+1}^{k'} g'(\vece^{i,k})$ for all $i,k'$ and $g(\vex)=\sum_{i=1}^n g_i(x_i)$ for all $\vex$.  

Thus, it suffices to construct an equivalent function to $f'$ (in dimension $d'=2nN$) over the unit cube so $\tilde{N}=1$.  The resulting gap of $g$ is not larger than the gap of $g'$ over the unit cube, so by Proposition \ref{FT-prop} the gap of $g$ is at most $2^{d^3}\tilde{N}^{d^2}=2^{(2nN)^3}$.  Since we construct $g$ as a separable function, we conclude that it is indeed separable.  

The requirement that $g_i$ is convex (concave) for a given $i$ is implied by the equivalence to $f'$ and the fact that the $i$-th coordinate of $f'$ is convex (concave, respectively).  This is so for $f_i$ convex as requiring that $h_{i,k}\geq h_{i,k-1}$ is the same as requiring that $g(\vece^{i,k}) \geq g(\vece^{i,k-1})$ and this is implied by $f'(\vece^{i,k}) \geq f'(\vece^{i,k-1})$ that holds as $f_i$ is convex (and similarly for $f_i$ concave).
\end{proof}

With respect to the constructive bound for the separable convex function due to \cite{Eisen23} (i.e., Theorem 3 part 2 in \cite{Eisen23}), we note the following.  In that work the approach of using a constructive bound for the linear case is also considered.  But in their application $d=n(2N+1)$ is larger than our dimension, and the box edge length $\tilde{N}=n$ is significantly larger than our constant box edge length $\tilde{N}=1$.  Thus, our approach leads to a smaller bound than the one of \cite{Eisen23}.

\section{Separable quadratic functions\label{sec:sep-quad}}
Next, we turn our attention to separable quadratic functions.  These are special type of separable functions for which we are able to guarantee smaller bounds.  These are functions $f(\vex)$ that can be written as $f(\vex)=\sum_{i=1}^n f_i(x_i)$ for which $f_i(x_i)$ can be written as $f_i(x_i)=\alpha^f_i \cdot x_i^2+\beta^f_i \cdot x_i +\gamma_i$ where $\alpha^f_i, \beta^f_i, \gamma_i \in \R$ and we require the equivalent function $g$ to be of the same form.  

The method to handle such functions is to modify our solution for linear functions.  Without loss of generality we assume that $f_i(0)=0$ for all $i\in [n]$, and we will set $g_i(0)=0$ for all $i\in [n]$. 
Thus, we will assume that for all $i\in [n]$, $g_i(x_i) = \alpha^g_i \cdot x_i^2+\beta^g_i \cdot x_i$.
 Our decision variables are the multipliers $\alpha^g_i, \beta^g_i$ for all $i$.  These are $2n$ decision variables, and we add to that another decision variable $g_{gap}$ denoting the maximum gap.  We note that if these decision variables are integers, then defining $g_i(x_i)=\alpha^g_i \cdot x_i^2+\beta^g_i \cdot x_i$  gives an integer value for all $i\in [n],x_i \in [-N,N]$.  Furthermore, $g(\vex)$ is a linear combination of the decision variables with integer coefficients with a maximum absolute value of the coefficients of at most $N^2$. 

We use exactly the same set of linear inequalities as we did for linear functions. That is,  $g_{gap} - [g(\vex)-g(\vey) ] \geq 0, \ \ \ \forall \vex,\vey \in  [-N,N]^n$ and the constraints enforcing the equivalence to $f$.  That is, for each pair $ \vex,\vey \in  [-N,N]^n$, we have one of the
following three constraints.
\begin{enumerate} 
\item If $f(\vex) > f(\vey)$, we introduce the constraint $g(\vex)-g(\vey) \geq 1$.
\item If $f(\vex) = f(\vey)$, we introduce the constraint $g(\vex)-g(\vey) =0$.
\item If $f(\vex) < f(\vey)$, we introduce the constraint $g(\vey)-g(\vex) \geq 1$.
\end{enumerate}

Similarly to the case of linear functions, we conclude that the new linear program is scalable.  Furthermore it is feasible because $f$ itself together with its implied parameters satisfies the constraints together with $f_{gap}$, since it is integer for every integer vector. Furthermore, $g_{gap}$ is lower bounded by $0$, and thus the polyhedron defined using these linear inequalities has an extreme point.  
Let $A$ be the constraint matrix of this feasibility linear program and $\veb$ be the right hand side.  Then, $A$ has $d=2n+1$ columns, the maximum absolute value of a coefficient in $(A,\veb)$ is $a=2N^2$.   Using Proposition \ref{lp:prel}, we conclude that there is an integer solution with $g_{gap} \leq d! a^d \leq \sqrt{2\pi d} \cdot \left( \frac de \right)^d \cdot e^{\frac {1}{12d}} \cdot a^d = \sqrt{2\pi (2n+1)} \cdot \left( \frac {(2n+1)\cdot 2N^2}{e} \right)^{2n+1} \cdot e^{\frac {1}{12(2n+1)}}$. The same upper bound holds also for the case where $f,g$ are separable quadratic convex functions for which we require $\alpha^g_i \geq 0$ for all $i\in [n]$ as additional constraints in the scalable linear program.  These additional constraints do not change the parameters of $a,d$ and we get the same bounds. We thus obtained the following result.

\begin{theorem}
Let $$\rho_{sep-quad}= \sqrt{2\pi (2n+1)} \cdot \left( \frac {(2n+1)\cdot 2N^2}{e} \right)^{2n+1} \cdot e^{\frac {1}{12(2n+1)}} . $$ 
Every separable quadratic function $f: \R^n \to \R$ is $\rho_{sep-quad}$-reducible to some separable quadratic function $g$.  Every separable convex quadratic function $f: \R^n \to \R$ is $\rho_{sep-quad}$-reducible to some separable convex quadratic function $g$.
\end{theorem}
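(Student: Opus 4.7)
The plan is to set up a scalable feasibility linear program whose integer solutions encode candidate separable quadratic functions equivalent to $f$, and then apply Proposition \ref{lp:prel} just as in Sections \ref{sec:lin} and \ref{sec:sep}. First I would normalize by assuming $f_i(0) = 0$ for every $i \in [n]$ (subtracting a constant does not affect the equivalence relation) and symmetrically require $g_i(0)=0$. This kills the $\gamma$ parameters and reduces the description of $g$ to the $2n$ real coefficients $\alpha^g_i, \beta^g_i$ for $i \in [n]$. Together with an auxiliary variable $g_{gap}$ intended to bound the gap, this yields $d = 2n+1$ decision variables, and for each $\vex \in [-N,N]^n$ the quantity $g(\vex) = \sum_{i=1}^n (\alpha^g_i x_i^2 + \beta^g_i x_i)$ is a linear function of these variables with integer coefficients, so integer values of the decision variables yield a $g$ with integer values on the box.

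Next I would install the same family of constraints as in the earlier sections: for each ordered pair $\vex, \vey \in [-N,N]^n$ a constraint $g(\vex)-g(\vey)\geq 1$, or $g(\vex)-g(\vey)=0$, or $g(\vey)-g(\vex)\geq 1$, according to the sign of $f(\vex)-f(\vey)$, together with $g_{gap} - [g(\vex)-g(\vey)] \geq 0$. All equalities have right-hand side $0$ and all inequalities have nonnegative right-hand sides, so the program is scalable; the original coefficients of $f$ together with $g_{gap} = f_{gap}$ provide a feasible point, and since $g_{gap}\geq 0$ the feasible polyhedron contains an extreme point. The coefficient of $\alpha^g_i$ in any row is $\pm(x_i^2 - y_i^2)$, the coefficient of $\beta^g_i$ is $\pm(x_i - y_i)$, and the coefficient of $g_{gap}$ lies in $\{0,1\}$; for $x_i, y_i \in [-N,N]$ all of these have absolute value at most $2N^2$. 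Applying Proposition \ref{lp:prel} with $d = 2n+1$ and $a = 2N^2$ gives an integer feasible solution with $g_{gap} \leq d!\cdot a^d$, and substituting the Stirling bound produces exactly $\rho_{sep-quad}$.

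For the separable convex quadratic variant I would simply append the constraints $\alpha^g_i \geq 0$ for each $i \in [n]$. These have right-hand side $0$ (preserving scalability) and coefficients bounded by $1$, so neither $d$ nor $a$ changes and the same bound transfers verbatim; convexity of $f$ guarantees that its own coefficients $\alpha^f_i \geq 0$ witness feasibility. I do not anticipate a serious obstacle: the only step worth double-checking is the coefficient bound, where one must verify that $|x_i^2 - y_i^2| \leq 2N^2$ on the box (trivially true) so that $a = 2N^2$ is a correct uniform bound on all entries of $(A, \veb)$. Everything else is a direct adaptation of the template already developed for the linear and separable cases.
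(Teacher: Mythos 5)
Your proposal is correct and follows essentially the same route as the paper: the same normalization $f_i(0)=g_i(0)=0$, the same $d=2n+1$ decision variables $\alpha^g_i,\beta^g_i,g_{gap}$, the same equivalence and gap constraints, the same parameters $a=2N^2$ fed into Proposition \ref{lp:prel}, and the same treatment of convexity via the scalable constraints $\alpha^g_i\geq 0$. No substantive differences from the paper's argument.
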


Next, we turn our attention to the possibility of constructing an equivalent function of a small gap in polynomial time.   We establish the following result.

\begin{theorem}
There is an algorithm whose time complexity is polynomial in $n$ that given a separable quadratic function $f$ finds an equivalent separable quadratic function $g$ with gap at most $2^{8n^3} N^{8n^2}$. 
\end{theorem}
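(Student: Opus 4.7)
My plan is to reduce to Proposition \ref{FT-prop} by exploiting the fact that a separable quadratic function is \emph{linear in its coefficients}. Collect the parameters into $\vew = (\alpha^f_1, \ldots, \alpha^f_n, \beta^f_1, \ldots, \beta^f_n) \in \Z^{2n}$ and define the lifting map $\vev : [-N,N]^n \to [-N^2, N^2]^{2n}$ by $\vev(\vex) = (x_1^2, \ldots, x_n^2, x_1, \ldots, x_n)$, so that $f(\vex) = \vew \cdot \vev(\vex)$ for every $\vex \in [-N,N]^n$. The crux of the reduction is that any linear function on $\Z^{2n}$ that is equivalent to $\tilde f(\veu) \df \vew \cdot \veu$ on the box $[-N^2, N^2]^{2n}$ yields, after pullback along $\vev$, an equivalent separable quadratic on $[-N,N]^n$.

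Next I would invoke Proposition \ref{FT-prop} on $\tilde f$ with $d = 2n$ and $\tilde N = N^2$. This produces, in time polynomial in $d$, an integer coefficient vector $\vew' = (\alpha^g_1, \ldots, \alpha^g_n, \beta^g_1, \ldots, \beta^g_n)$ defining a linear function $\tilde g$ equivalent to $\tilde f$ on $[-N^2, N^2]^{2n}$ with gap at most $2^{d^3}\tilde N^{d^2} = 2^{(2n)^3}(N^2)^{(2n)^2} = 2^{8n^3} N^{8n^2}$. Setting $g(\vex) \df \tilde g(\vev(\vex)) = \sum_{i=1}^n (\alpha^g_i x_i^2 + \beta^g_i x_i)$ then yields a separable quadratic function with integer coefficients. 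Equivalence of $g$ and $f$ on $[-N,N]^n$ is inherited via $\vev$ from the equivalence of $\tilde g$ and $\tilde f$ on the box (equivalence on the box restricts to equivalence on the image of $\vev$), and the gap of $g$ on $[-N,N]^n$ equals the gap of $\tilde g$ on that image, which is at most the gap of $\tilde g$ on the full box.

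There is no substantial obstacle here; the argument is a direct repackaging of Frank--Tardos for the linearized problem. The one point worth tracking carefully is the running time, which must be polynomial in $n$ rather than in $n + \log N$. This holds because the Frank--Tardos preprocessing recalled in Section \ref{sec:prel} allows us to assume without loss of generality that $\tilde N \leq (d+1)! + 1$, so that Proposition \ref{FT-prop}'s runtime --- polynomial in $d$ after this preprocessing --- becomes polynomial in $d = 2n$, hence in $n$. A secondary minor point is that Proposition \ref{FT-prop} enforces equivalence on the full box $[-N^2, N^2]^{2n}$ rather than only on the sparse image of $\vev$ that we actually need; this is mildly wasteful but does not affect the stated bound.
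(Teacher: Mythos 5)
Your proposal is correct and follows essentially the same route as the paper: linearize the separable quadratic as a linear function in the $2n$ coefficients over the box $[-N^2,N^2]^{2n}$ (so $d=2n$, $\tilde N = N^2$), apply Proposition \ref{FT-prop} to get an equivalent integer linear function with gap at most $2^{8n^3}N^{8n^2}$ in time polynomial in $n$, and pull it back along the lift $\vex \mapsto (x_i^2, x_i)_i$, with equivalence and the gap bound inherited from the full box to its image. The only differences from the paper's proof are cosmetic (ordering of the coefficient components and the explicit remark about equivalence on the full box being stronger than needed).
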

\begin{proof}
We consider the function $f'$ defined over the box $[-N^2,N^2]^{2n}$ where the function $f'$ is the linear function with coefficients $\alpha^f_i,\beta^f_i$ in components $2i-1$ and $2i$ respectively.  We are interested in finding an equivalent function over a subset of the box, that is, points $\vey$ where for every $i$, $y_{2i-1}=y_{2i}^2$.  However, $f'$ is defined over all points in the box and over the integer points, it is integer.  Thus, we have a formulation where the dimension is $d=2n$ and the value of $\tilde{N}$ is $N^2$.  An equivalent linear function $g'$ to $f'$ defines integer coefficients $\alpha^{g'}_i,\beta^{g'}_i$ for all $i$, and using these coefficients we define the separable quadratic function  $$g(\vex) = \sum_{i\in [n]}\left( \alpha^{g'}_i \cdot x_i^2+\beta^{g'}_i \cdot x_i \right) . $$  

The gap of this separable quadratic function $g$ over $[-N,N]^n$ is at most the gap of the linear function $g'$ over $[-N^2,N^2]^{2n}$.  Using Proposition \ref{FT-prop}, this gap is at most $2^{(2n)^3} N^{2\cdot (2n)^2} = 2^{8n^3} N^{8n^2}$ and a linear function satisfying these conditions can be derived using an algorithm whose time complexity is polynomial in $n$.  

The function $g$ is indeed equivalent to the function $f$ as for two points $\vex,\vey \in [-N,N]^n$ for which the two corresponding points $\vex',\vey'\in [-N^2,N^2]^{2n}$ defined as $x'_{2i-1} = x_i^2, x'_{2i}=x_i, y'_{2i-1}=y_i^2, y'_{2i}=y_i$ for all $i\in [n]$, we have that $f(\vex)\geq f(\vey)$ if and only if $f'(\vex')\geq f'(\vey')$ and similarly $g(\vex)\geq g(\vey)$ if and only if $g'(\vex')\geq g'(\vey')$.  Thus, the equivalence between $f$ and $g$ follows from the equivalence between $f'$ and $g'$. 
\end{proof}

\section{Non-separable quadratic functions\label{sec:quad}}
Last, in this section we consider quadratic functions (not necessarily separable functions).  These are  functions $f(\vex)$ that can be written as $f(\vex)=\sum_{i\in [n]} \sum_{j=i}^n \alpha^f_{ij} \cdot x_i \cdot x_j +\sum_{i\in [n]} \beta^f_i \cdot x_i +\gamma^f$ and we require the equivalent function $g$ to be of the same form.  

Once again we use the same method.  Without loss of generality we assume that $\gamma^f=\gamma^g=0$. 
 Our decision variables are the multipliers $\alpha^g_{ij}, \beta^g_i$ for all $i,j$.  These are $\frac{n(n+3)}{2}$ decision variables, and we add to that another decision variable $g_{gap}$ denoting the maximum gap.  We treat this family of decision variables and note that if these decision variables are integers, then defining $g(\vex)=\sum_{i\in [n]} \sum_{j=i}^n \alpha^g_{ij} \cdot x_i \cdot x_j +\sum_{i\in [n]} \beta^g_i \cdot x_i  $ gives an integer value for all $\vex$.  Furthermore, $g(\vex)$ is a linear combination of the decision variables with integer coefficients with a maximum absolute value of the coefficients of at most $N^2$. 

We use exactly the same set of linear inequalities as we did for linear functions. That is,  $g_{gap} - [g(\vex)-g(\vey) ] \geq 0, \ \ \ \forall \vex,\vey \in  [-N,N]^n$ and the constraints enforcing the equivalence to $f$.  That is, for each pair $ \vex,\vey \in  [-N,N]^n$, we have one of the
following three constraints.
\begin{enumerate} 
\item If $f(\vex) > f(\vey)$, we introduce the constraint $g(\vex)-g(\vey) \geq 1$.
\item If $f(\vex) = f(\vey)$, we introduce the constraint $g(\vex)-g(\vey) =0$.
\item If $f(\vex) < f(\vey)$, we introduce the constraint $g(\vey)-g(\vex) \geq 1$.
\end{enumerate}

Similarly to the case of linear functions, we conclude that the new linear program is scalable.  Furthermore it is feasible because $f$ itself with its implied parameters satisfies the constraints together with $f_{gap}$, since it is integer for every integer vector. Furthermore, $g_{gap}$ is lower bounded by $0$, and thus the polyhedron defined using these linear inequalities has an extreme point.  
Let $A$ be the constraint matrix of this feasibility linear program and $\veb$ be the right hand side.  Then, $A$ has $d=\frac{n(n+3)}{2}+1$ columns, the maximum absolute value of a coefficient in $(A,\veb)$ is $a=2N^2$.   By Proposition \ref{lp:prel}, we conclude that there is an integer solution with $g_{gap} \leq \sqrt{2\pi d} \cdot \left( \frac de \right)^d \cdot e^{\frac {1}{12d}} \cdot a^d = \sqrt{2\pi \cdot( \frac{n(n+3)}{2}+1)} \cdot \left( \frac {(\frac{n(n+3)}{2}+1) \cdot (2N^2)}{e} \right)^{\frac{n(n+3)}{2}+1} \cdot e^{\frac {1}{12\cdot(\frac{n(n+3)}{2}+1)}}$.  We thus obtained the following result.


\begin{theorem}
Let $$\rho_{quad}=\sqrt{2\pi \cdot( \frac{n(n+3)}{2}+1)} \cdot \left( \frac {(\frac{n(n+3)}{2}+1) \cdot (2N^2)}{e} \right)^{\frac{n(n+3)}{2}+1} \cdot e^{\frac{1}{6n(n+3)+12 }} .$$
Every quadratic function $f: \R^n \to \R$ is $\rho_{quad}$-reducible to some quadratic function $g$. 
\end{theorem}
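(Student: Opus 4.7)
My plan is to apply the same LP-based template already used in Sections \ref{sec:lin} through \ref{sec:sep-quad}: parameterize the candidate $g$ by its coefficients, phrase the equivalence conditions as a scalable feasibility linear program in those coefficients, and then invoke Proposition \ref{lp:prel} to extract a small-magnitude integer feasible solution.

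First I would normalize $\gamma^g=0$, which is harmless since additive constants preserve the order on $\vex$. The free parameters then are the $n(n+1)/2$ quadratic coefficients $\alpha^g_{ij}$ with $i\le j$, the $n$ linear coefficients $\beta^g_i$, and one gap variable $g_{gap}$, so the LP has dimension $d=n(n+3)/2+1$. For any $\vex\in[-N,N]^n$, the value $g(\vex)$ is a linear combination of these decision variables with coefficients of the form $x_i x_j$ or $x_i$, each of absolute value at most $N^2$; hence the differences $g(\vex)-g(\vey)$ that actually enter the linear program have coefficient magnitude at most $a=2N^2$.

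Next I would write the constraints verbatim as in the previous sections: a gap constraint $g_{gap}-(g(\vex)-g(\vey))\ge 0$ for every ordered pair $\vex,\vey\in[-N,N]^n$, together with the three-case equivalence constraints ($\ge 1$, $=0$, or $\ge 1$ in the reversed direction) driven by the sign of $f(\vex)-f(\vey)$. All equalities are homogeneous and all inequalities have non-negative right-hand sides, so the LP is scalable. It is feasible, as witnessed by the coefficients of $f$ itself together with $f_{gap}$, and the polyhedron has an extreme point since $g_{gap}\ge 0$ bounds that coordinate from below, just as in the earlier cases. Applying Proposition \ref{lp:prel} then yields an integer feasible solution with $g_{gap}\le d!\,a^d$, and substituting into the Stirling bound $d!\le \sqrt{2\pi d}\,(d/e)^d\,e^{1/(12d)}$ with $d=n(n+3)/2+1$ and $a=2N^2$ gives exactly $\rho_{quad}$.

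The main obstacle I anticipate is bookkeeping rather than conceptual: one must verify carefully that although the coefficient of $\alpha^g_{ij}$ in $g(\vex)$ alone is as large as $N^2$, the difference $x_i x_j - y_i y_j$ that enters the constraints stays bounded by $2N^2$ and not, say, $N^2(N+\text{something})$. Relatedly, one must be sure that the dimension count $n(n+3)/2+1$ correctly subsumes the symmetric indexing $i\le j$ and adds exactly one slot for $g_{gap}$, since any miscount propagates into the exponents of $\rho_{quad}$. Once these two checks are made, the remainder is a direct substitution into the template, and no new machinery beyond Proposition \ref{lp:prel} is needed.
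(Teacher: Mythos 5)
Your proposal is correct and follows essentially the same route as the paper: the same parameterization with $d=\frac{n(n+3)}{2}+1$ decision variables (after normalizing $\gamma^g=0$), the same gap and three-case equivalence constraints giving a scalable feasible LP with $a=2N^2$, and the same application of Proposition \ref{lp:prel} with the Stirling bound. The two checks you flag (the $2N^2$ bound on $x_i x_j - y_i y_j$ and the dimension count) go through exactly as you expect.
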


We conclude by considering the possibility of constructing an equivalent function of a small gap in polynomial time also for this case.   We establish the following result.

\begin{theorem}
There is an algorithm whose time complexity is polynomial in $n$ that given a quadratic function $f$ finds an equivalent quadratic function $g$ with gap at most $$2^{(\frac{n(n+3)}{2})^3} N^{2(\frac{n(n+3)}{2})^2} . $$
\end{theorem}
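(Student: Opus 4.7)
The plan is to reuse the linearization trick from the separable quadratic case, but with more auxiliary coordinates so that every monomial of a general quadratic is captured. I would introduce one coordinate $y_{ij}$ for each pair $i\leq j$ in $[n]$ (meant to represent the monomial $x_i x_j$) and one coordinate $z_i$ for each $i\in[n]$ (meant to represent $x_i$). This gives a lifted dimension $d=\frac{n(n+3)}{2}$, and every integer point $\vex\in[-N,N]^n$ lifts to an integer point $\vex'$ of the box $[-N^2,N^2]^d$, since $|x_i x_j|\leq N^2$ and $|x_i|\leq N\leq N^2$.

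Next, I would define an auxiliary linear function $f'$ on $[-N^2,N^2]^d$ by assigning the coefficient $\alpha^f_{ij}$ to the coordinate $y_{ij}$ and the coefficient $\beta^f_i$ to the coordinate $z_i$. By construction $f'(\vex')=f(\vex)$ for every lift. I would then invoke Proposition~\ref{FT-prop} with dimension $d$ and box-edge $\tilde{N}=N^2$: this produces, in time polynomial in $d$ (hence in $n$), an equivalent integer linear function $g'$ on $[-N^2,N^2]^d$ whose gap is bounded by $2^{d^3}\tilde{N}^{d^2} = 2^{(\frac{n(n+3)}{2})^3} N^{2(\frac{n(n+3)}{2})^2}$.

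Finally I would pull $g'$ back to a quadratic function $g$ on $[-N,N]^n$ by reading off the coefficients of $g'$ on the coordinates $y_{ij}$ and $z_i$ and declaring these to be $\alpha^g_{ij}$ and $\beta^g_i$. Since the resulting function satisfies $g(\vex)=g'(\vex')$ on every lift and the analogous identity holds for $f,f'$, the equivalence of $f'$ and $g'$ on the full lifted box immediately entails the equivalence of $f$ and $g$ on $[-N,N]^n$. The gap bound transfers as well because the image of the lift is contained in the larger box and $g'$ is integer-valued there.

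The main step to justify, rather than a serious obstacle, is that Proposition~\ref{FT-prop} delivers equivalence of $g'$ to $f'$ across the whole box $[-N^2,N^2]^d$ rather than only on the subset of lifts of $[-N,N]^n$. This is strictly stronger than what is needed, so it does no harm; the remainder is bookkeeping on integrality and on matching coefficient patterns between $f$ and $f'$ and between $g$ and $g'$.
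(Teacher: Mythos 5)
Your proposal is correct and follows essentially the same route as the paper: the same lift of $[-N,N]^n$ into $[-N^2,N^2]^{\frac{n(n+3)}{2}}$ via coordinates for the monomials $x_ix_j$ and $x_i$, the same application of Proposition~\ref{FT-prop} with $d=\frac{n(n+3)}{2}$ and $\tilde{N}=N^2$, and the same pull-back of the coefficients of $g'$ to define $g$, with equivalence and the gap bound transferred from the lifted box to the original one.
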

\begin{proof}
We consider the function $f'$ defined over the box $[-N^2,N^2]^{\frac{n(n+3)}{2}}$ where the function $f'$ is the linear function with coefficients $\alpha^f_{ij}$ for all $i,j\in [n]$ such that $i\leq j$ and $\beta^f_i$ for $i\in [n]$ where the first $n$ components are $\beta^f_i$ for $i\in [n]$ and the other components are the $\alpha^f_{ij}$.  We let $\pi(k)$ be the two indexes $ij$ for which $\alpha^f_{ij}$ is written in the $k$-th component of such vector and let $\pi^{-1}(ij) =k$ be the inverse of this function (so that $\pi(k)=ij$.  We are interested in finding an equivalent function over a subset of the box, that is, points $\vey$ where for every $i,j\in [n]$ such that $i\leq j$, we have $y_i \cdot y_j =y_{\pi^{-1}(ij)}$.  However, $f'$ is defined over all points in the box and over the integer points, it is integer.  Thus, we have a formulation where the dimension is $d=\frac{n(n+3)}{2}$ and the value of $\tilde{N}$ is $N^2$.  An equivalent linear function $g'$ to $f'$ defines integer coefficients $\alpha^{g'}_{ij},\beta^{g'}_i$ for all $i,j$, and using these coefficients we define the quadratic function   $$g(\vex)=\sum_{i\in [n]} \sum_{j=i}^n \alpha^{g'}_{ij} \cdot x_i \cdot x_j +\sum_{i\in [n]} \beta^{g'}_i \cdot x_i   . $$

The gap of this quadratic function $g$ over $[-N,N]^n$ is at most the gap of the linear function $g'$ over $[-N^2,N^2]^{\frac{n(n+3)}{2}}$.  Using Proposition \ref{FT-prop}, this gap is at most $2^{(\frac{n(n+3)}{2})^3} N^{2(\frac{n(n+3)}{2})^2}$ and a linear function satisfying these conditions can be derived using an algorithm whose time complexity is polynomial in $n$.  

For a point $\vex\in [-N,N]^n$ we define a corresponding point $\vex'\in [-N^2,N^2]^{\frac{n(n+3)}{2}}$ where the first $n$ components of $\vex'$ are identical to $\vex$ (i.e., $x'_i=x_i$ for $i\in [n]$), and for $k>n$ for which $\pi^{-1}(k)=ij$, the $k$-th component of $\vex'$ is $x'_k = x_i \cdot x_j$
The function $g$ is indeed equivalent to the function $f$ as for two points $\vex,\vey \in [-N,N]^n$ for which the two corresponding points $\vex',\vey'\in [-N^2,N^2]^{\frac{n(n+3)}{2}}$, we have that $f(\vex)\geq f(\vey)$ if and only if $f'(\vex')\geq f'(\vey')$ and similarly $g(\vex)\geq g(\vey)$ if and only if $g'(\vex')\geq g'(\vey')$.  Thus, the equivalence between $f$ and $g$ follows from the equivalence between $f'$ and $g'$. 
\end{proof}

\section*{Acknowledgments}
The author is partially supported by ISF -  Israel Science Foundation grant number 1467/22.

\bibliographystyle{plainurl}


\begin{thebibliography}{1}

\bibitem{banff}
Jesus De~Loera, Raymond Hemmecke, Matthias Köppe, Jon Lee, Shmuel Onn, and
  Robert Weismantel.
\newblock Report of focused research group: Nonlinear discrete optimization.
\newblock Technical report, Banff International Research Station for
  Mathematical Innovation and Discovery, July 2010.
\newblock URL:
  \url{https://www.birs.ca/events/2010/focussed-research-groups/10frg140}.

\bibitem{E+22arxiv}
Friedrich Eisenbrand, Christoph Hunkenschr{\"o}der, Kim-Manuel Klein, Martin
  Kouteck{\`y}, Asaf Levin, and Shmuel Onn.
\newblock An algorithmic theory of integer programming.
\newblock {\em arXiv preprint arXiv:1904.01361}, 2019.

\bibitem{Eisen23}
Friedrich Eisenbrand, Christoph Hunkenschr{\"{o}}der, Kim{-}Manuel Klein,
  Martin Kouteck{\'{y}}, Asaf Levin, and Shmuel Onn.
\newblock Reducibility bounds of objective functions over the integers.
\newblock {\em Operations Research Letters}, 51(6):595--598, 2023.

\bibitem{epstein2024efficient}
Leah Epstein and Asaf Levin.
\newblock Efficient approximation schemes for scheduling on a stochastic number
  of machines.
\newblock {\em arXiv preprint arXiv:2409.10155}, 2024.

\bibitem{FT}
Andr{\'a}s Frank and {\'E}va Tardos.
\newblock An application of simultaneous diophantine approximation in
  combinatorial optimization.
\newblock {\em Combinatorica}, 7(1):49--65, 1987.

\end{thebibliography}

\end{document}